\newcommand{\rmd}{\mathrm{d}}
\newcommand{\rmC}{\mathrm{C}}
\newcommand{\rmI}{\mathrm{I}}
\newcommand{\bbR}{\mathbb{R}}
\newcommand{\abs}[1]{|{#1}|}
\newcommand{\bigparen}[1]{\bigl({#1}\bigr)}
\newcommand{\Bigparen}[1]{\Bigl({#1}\Bigr)}
\newcommand{\bigbracket}[1]{\bigl[{#1}\bigr]}
\newcommand{\set}[1]{\{{#1}\}}
\newcommand{\Bigset}[1]{\Bigl\{{#1}\Bigr\}}
\newcommand{\norm}[1]{\|{#1}\|}
\newcommand{\bignorm}[1]{\bigl\|{#1}\bigr\|}
\newcommand{\ip}[2]{\langle{#1},{#2}\rangle}
\newcommand{\bigip}[2]{\bigl\langle{#1},{#2}\bigr\rangle}
\newcommand{\biggip}[2]{\biggl\langle{#1},{#2}\biggr\rangle}
\newcommand{\argmin}[1]{\underset{#1}{\mathrm{argmin}}}
\newtheorem{theorem}{Theorem}
\title{Optimal frames and Newton's method}
\author{Matthew Fickus\thanks{Air Force Institute of Technology, Department of Mathematics, Wright-Patterson AFB, OH 45433, Matthew.Fickus@afit.edu} \and Dustin G.~Mixon\thanks{Princeton University, Program in Applied and Computational Mathematics, Princeton, NJ 08544}}
\date{}
\begin{document}
\maketitle

\begin{abstract}
Given a parametrized family of finite frames, we consider the optimization problem of finding the member of this family whose coefficient space most closely contains a given data vector.  This nonlinear least squares problem arises naturally in the context of a certain type of radar system.  We derive analytic expressions for the first and second partial derivatives of the objective function in question, permitting this optimization problem to be efficiently solved using Newton's method.  We also consider how sensitive the location of this minimizer is to noise in the data vector.  We further provide conditions under which one should expect the minimizer of this objective function to be unique.  We conclude by discussing a related variational-calculus-based approach for solving this frame optimization problem over an interval of time.
\end{abstract}

\section{Introduction}

In frame theory, the \textit{synthesis operator} of a finite sequence of vectors $\set{f_n}_{n=1}^{N}$ in $\mathbb{R}^M$ is the operator $F:\bbR^N\rightarrow\bbR^M$, \smash{$Fw:=\sum_{n=1}^{N}w(n)f_n$}.  That is, $F$ is an $M\times N$ matrix whose $n$th column is $f_n$.  Taking the transpose of the synthesis operator yields the \textit{analysis operator} $F^*:\bbR^M\rightarrow\bbR^N$ given by $(F^* v)(n)=\ip{v}{f_n}$.  The sequence $\set{f_n}_{n=1}^{N}$ is a \textit{frame} for $\bbR^M$ if there exist \textit{frame bounds} $0<A\leq B<\infty$ such that $A\norm{v}^2\leq\norm{F^* v}^2\leq B\norm{v}^2$ for all $v\in\bbR^M$.  In this finite-dimensional setting, we have that $\set{f_n}_{n=1}^{N}$ is a frame for $\bbR^M$ if and only if it spans $\bbR^M$, which is equivalent to having its \textit{frame operator} $FF^*$ be invertible.

Frame theory was born of the theory of linear least squares: given an overdetermined system $F^* v=w$, the goal is to reconstruct $v$ from $F$ and $w$ by minimizing $\norm{F^* v-w}^2$.  The minimizers $v$ are characterized as the solutions to the normal equations $FF^* v=Fw$, which have a unique solution of $v=(FF^*)^{-1}F w$ if and only if $\set{f_n}_{n=1}^{N}$ is a frame for $\bbR^M$.  The minimum value of $\norm{F^* v-w}^2$ is thus:
\begin{equation}
\label{equation.least squares minimum}
\min_{v\in\bbR^M}\norm{F^* v-w}^2
=\norm{F^*(FF^*)^{-1}F w-w}^2
=\bignorm{\bigbracket{\rmI-F^*(FF^*)^{-1}F}w}^2.
\end{equation}

In this paper, we focus on a generalization of these ideas that, as detailed below, arises naturally in a certain radar problem.  In this generalization, we are not given a single frame $\set{f_n}_{n=1}^{N}$ for $\bbR^M$ but rather a parametrized family of frames $\set{f_n(x)}_{n=1}^{N}$ for $\bbR^M$, where the parameter vector $x$ lies in some subset $\Omega$ of $\bbR^P$.  That is, we have an $M\times N$ synthesis matrix $F(x)$, each of whose entries depend on $P$ real parameters.  In particular, given $F(x)$ and $w$, our goal is to solve:
\begin{equation}
\label{equation.nonlinear least squares}
\argmin{x\in\Omega}\min_{v\in\mathbb{R}^M}\norm{F^*(x)v-w}^2.
\end{equation}
That is, we want to find the parameters $x$ for which the range of the corresponding analysis operator $F^*(x)$ is as close as possible to $w$.  In a radar application detailed below, the optimal $x$ and $v$ correspond to the unknown position and velocity of a target of interest, $F(x)$ has an explicit formula in terms of the locations of a set of radar transmitters and receivers, and $w$ is a collection of Doppler measurements; solving~\eqref{equation.nonlinear least squares} corresponds to determining the target's location based on Doppler information alone.  More generally, solving~\eqref{equation.nonlinear least squares} corresponds to finding the particular frame from the family of frames $\set{f_n(x)}_{n=1}^{N}$ which is most consistent with the measured data $w$. 

The first step in solving the nonlinear least squares problem~\eqref{equation.nonlinear least squares} is to recognize that for any fixed $x\in\Omega$, the minimization over $v$ is but a simple linear least squares problem.  That is, in light of \eqref{equation.least squares minimum}, solving~\eqref{equation.nonlinear least squares} reduces to minimizing the \textit{error function} $E:\Omega\rightarrow[0,\infty)$, 
\begin{equation}
\label{equation.definition of E}
E(x)
:=\bignorm{\set{\rmI-F^*(x)[F(x)F^*(x)]^{-1}F(x)}w}^2.
\end{equation}

The bulk of this paper is devoted to the study of the minimization of~\eqref{equation.definition of E}.  Our first priority is to develop a practical method for finding the optimal $x$.  However, since $F$ can depend nonlinearly on $x$, it is unrealistic to expect a nice closed-form solution for this optimal $x$ in terms of $F$ and $w$.  We thus settle for a good numerical algorithm to iteratively compute $x$, namely Newton's method.  This method relies on explicit expressions for the gradient and Hessian of~\eqref{equation.definition of E}, which are presented in the following section.  In Section~3, we provide some results on the uniqueness of the optimal $x$ as well as the sensitivity of \eqref{equation.definition of E} to noise in $w$.  In the final section, we discuss an alternative variational-calculus-based approach for solving a time-varying version of~\eqref{equation.nonlinear least squares} in which $x$ and $w$ are functions of time and $v$ is the time-derivative of $x$.  In the remainder of the introduction, we detail our motivating radar application and highlight the relevant literature.

\subsection{A radar application}
In the radar community, the process of determining a target's location is known as \textit{localization}.  To be precise, let $x:\bbR\rightarrow\bbR^M$ denote the trajectory of the target.  That is, $x(t)$ denotes the location of the target at any given time $t$, and the dimension $M$ is typically either $2$ or $3$.  The goal of a given radar system is to determine $x(t)$ by bouncing electromagnetic signals off of the target.

Here, we focus on \textit{static} radar: the target is mobile, but the hardware that transmits and receives our radar signals is not.  More specifically, we consider \textit{multistatic radar}, which makes use of multiple fixed pairs of transmitters and receivers.  Let $N$ denote the number of these pairs, and let $\set{a_n}_{n=1}^{N}$ and $\set{b_n}_{n=1}^{N}$ in $\bbR^M$ denote the locations of the transmitters and receivers, respectively.  A signal broadcast from $a_n$, bounced off of the target located at $x(t)$, and then received at $b_n$ travels a total distance of $\varphi_n(x(t))$, where $\varphi_n:\bbR^M\rightarrow\bbR$ is the $n$th \textit{bistatic distance} function:
\begin{equation}
\label{equation.bistatic distance}
\varphi_n(x):=\norm{x-a_n}+\norm{x-b_n}.
\end{equation}

In the most simple radar systems, each transmitter broadcasts a sequence of pulses.  In the radar literature, the time lag between the transmitted and received pulses is known as the \textit{time difference of arrival} (TDOA).  By multiplying the TDOA by the speed of light, one obtains a set of $N$ real-valued measurements $\set{y_n(t)}_{n=1}^{N}$ which serve as estimates of $\set{\varphi_n(x(t))}_{n=1}^{N}$.  The target's true position $x(t)$ is then known to lie on the intersection of prolate spheroids of the form $\set{x\in\bbR^M : \varphi_n(x)=y_n(t)}$.

We focus on a different class of radar systems in which each transmitter broadcasts a continuous wave of constant frequency.  The periodic nature of such signals makes it nearly impossible to accurately measure TDOA.  Rather, one instead measures the \textit{frequency difference of arrival} (FDOA), namely the change in frequency between the transmitted and received wave.  This FDOA is proportional to the time-derivative of the bistatic distance:
\begin{equation*}
\frac{\rmd}{\rmd t}\varphi_n(x(t))
=\ip{\dot{x}(t)}{\nabla \varphi_n(x(t))},
\end{equation*}
a fact known as the \textit{Doppler effect}.
 
In short, we focus on FDOA multistatic radar, in which the goal is to determine $x(t)$ from a number of FDOA measurements $\set{w_n(t)}_{n=1}^{N}$, where each $w_n(t)$ should equal $\nabla \varphi_n(x(t))\cdot\dot{x}(t)$, up to measurement error and noise.  Here, one's first instinct is to try to explicitly solve the following system of $N$ first-order ordinary differential equations:
\begin{equation}
\label{equation.system of differential equations}
\ip{\dot{x}(t)}{\nabla \varphi_n(x(t))}=w_n(t),\quad\forall n=1,\dotsc,N.
\end{equation}
The problem with this approach is that when $M<N$, the system \eqref{equation.system of differential equations} is overdetermined and so it is not likely to be solvable.  This may be remedied with a standard least-squares approach, namely minimizing:
\begin{equation}
\label{equation.least squares regularization}
\sum_{n=1}^{N}\abs{\ip{\dot{x}(t)}{\nabla \varphi_n(x(t))}-w_n(t)}^2.
\end{equation}
Indeed, in the final section of this paper, we apply a variational approach to determine the Euler-Lagrange equation that any minimizer $x$ of the time-integral of \eqref{equation.least squares regularization} must satisfy.  However, the bulk of this paper is devoted to a more simple approach, namely using frame theory to tackle the minimization of~\eqref{equation.least squares regularization}.  Here, the key idea is to minimize~\eqref{equation.least squares regularization} \textit{individually} at each time $t$.

To be precise, let us now assume that FDOA measurements $\set{w_n(t)}_{n=1}^{N}$ are only available at a single time $t_0$.  In this setting, the target's velocity at that instant is essentially an unknown variable which is independent from its position.  To simplify notation, we let $x:=x(t_0)$ and $v:=\dot{x}(t_0)$ in $\bbR^M$ denote this unknown position and velocity, and let $w\in\bbR^N$ denote the vector of FDOA measurements $\set{w_n(t_0)}_{n=1}^{N}$.  Denoting the vector $\nabla\varphi_n(x)$ as $f_n(x)$, the quantity~\eqref{equation.least squares regularization} at $t=t_0$ can then be rewritten as:
\begin{equation*}
\sum_{n=1}^{N}\abs{\ip{v}{f_n(x)}-w_n}^2
=\norm{F^*(x)v-w}^2.
\end{equation*}
Thus, our problem of minimizing~\eqref{equation.least squares regularization} for $t=t_0$ reduces to our frame optimization problem~\eqref{equation.nonlinear least squares} in the special case where $P=M$ and the $n$th frame element $f_n(x)$ is the gradient of the $n$th bistatic distance function~\eqref{equation.bistatic distance}; one may quickly show that this gradient is the sum of two unit vectors, namely the vectors pointing to the target from the $n$th transmitter and receiver, respectively:
\begin{equation}
\label{equation.multistatic frame}
f_n(x)
=\nabla\varphi_n(x)
=\frac{x-a_n}{\norm{x-a_n}}+\frac{x-b_n}{\norm{x-b_n}}.
\end{equation}
With perfect FDOA measurements, the target's true location corresponds to the global minimizer of the error function~\eqref{equation.definition of E}.  In the third section, we consider the localization error that results from imperfect measurements.  We also consider the uniqueness of this minimizer: the results suggest that in order to uniquely localize a target, one wants the number of FDOA measurements $N$ to be at least $2M$.  This makes intuitive sense since these measurements depend on $2M$ real-variable unknowns: $M$ unknowns for the target's position, and an additional $M$ unknowns for the target's velocity.

\subsection{Relevant literature}
Optimization is a significant tool in frame theory, see~\cite{BenedettoF:03,MasseyR:10} for example.  The first derivatives of certain frame-theoretic quantities are presented in~\cite{CasazzaF:09,CasazzaFM:11}; we build on these results here, computing both first and second derivatives of the more complicated quantity~\eqref{equation.definition of E}.

The motivating FDOA radar localization problem has long been a subject of interest in the radar community.  Early work focused on the radar-based tracking of transmitters moving in ballistic trajectories~\cite{WeinsteinL:80}, and a formal analysis of the underlying theory~\cite{Shensa:81}.  A least-squares formulation of this localization problem is given in~\cite{ChanJ:90,ChanT:91,ChanT:92}.  There, an approximate solution to~\eqref{equation.nonlinear least squares} was found by explicitly evaluating the error function~\eqref{equation.definition of E} over a large grid.  It has since been suggested that the location of aircraft may be determined solely by measuring the Doppler effect that their motion induces in the carrier waves of television broadcasts~\cite{PoullinL:94}.

\section{Newton's method}

Newton's method is a popular algorithm for solving nonlinear least squares problems.  We use it to minimize our error function $E(x)$ defined in~\eqref{equation.definition of E}.  Newton's method is an iterative algorithm and will converge to the minimizer at a quadratic rate provided $E(x)$ is sufficiently well-behaved and we make a good initial guess~\cite{NocedalW:99}.   A formal analysis of when $E(x)$ meets the necessary criteria for convergence is given in~\cite{Mixon:06}.  In practice, we make our initial guess $x_0$ by evaluating $E(x)$ over a grid, and choosing the grid point which yields the smallest value.

Given a current guess $x_k$, Newton's method approximates $E(x)$ as a paraboloid (second-order Taylor multinomial) on a neighborhood of $x_k$ and then moves in the direction of that paraboloid's vertex.  Explicitly, we let:
\begin{equation}
\label{equation.Newton's method}
x_{k+1}:=x_k-\gamma[(\nabla^2 E)(x_k)]^{-1}(\nabla E)(x_k),
\end{equation}
where $0<\gamma<1$ is some experimentally chosen step-size parameter and $(\nabla E)(x)$ and $(\nabla^2 E)(x)$ are the gradient and Hessian of~\eqref{equation.definition of E}, respectively.  We compute this gradient and Hessian using a type of differential calculus for matrix-valued functions.

To be precise, for an open subset $\Omega$ of $\bbR^P$, let $\rmC^1(\Omega,\bbR^{N_1\times N_2})$ denote the set of matrix-valued functions $A(x)$ from $\Omega$ into the set of all $N_1\times N_2$ matrices with the property that each of the $P$ partial derivatives of each of the $N_1N_2$ entries of $A(x)$ exist and is continuous on $\Omega$.  Here, the partial derivative \smash{$\frac{\partial A}{\partial x_p}(x)$} of $A(x)$ with respect to the $p$th variable $x_p$ is obtained by computing the $p$-partial derivative of each entry of $A(x)$ independently.  Equivalently, letting $\set{\delta_p}_{p=1}^{P}$ be the identity basis for $\bbR^P$, we have:
\begin{equation*}
\frac{\partial A}{\partial x_p}(x):=\lim_{t\rightarrow 0}\frac{1}{t}[A(x+t\delta_p)-A(x)].
\end{equation*}
One can easily show that this derivative is linear and has a matrix-product rule:
\begin{equation*}
\frac{\partial}{\partial x_p}[A_1(x)A_2(x)]
=\frac{\partial A_1}{\partial x_p}(x)A_2(x)+A_1(x)\frac{\partial A_2}{\partial x_p}(x),
\end{equation*}
for all $A_1\in\rmC^1(\Omega,\bbR^{N_1\times N_2})$, $A_2\in\rmC^1(\Omega,\bbR^{N_2\times N_3})$.  Writing inner products on $\bbR^N$ in terms of matrix products, this further yields the inner-product rule:
\begin{equation*}
\frac{\partial}{\partial x_p}\ip{w_1(x)}{w_2(x)}
=\biggip{\frac{\partial w_1}{\partial x_p}(x)}{w_2(x)}+\biggip{w_1(x)}{\frac{\partial w_2}{\partial x_p}(x)},
\end{equation*}
for all $w_1,w_2\in\rmC^1(\Omega,\bbR^N)$.  We shall also make use of a type of quotient rule.  To be precise, if $A\in\rmC^1(\Omega,\mathbb{R}^{N\times N})$, then its entries, and therefore determinant, are continuous in $x$.  In particular, if $A(x_0)$ is invertible for some $x_0\in\Omega$, then $A(x)$ is also invertible on a neighborhood of $x_0$.  Moreover, the cofactor form of $A^{-1}(x)$ implies that it is itself continuous in $x$.  As such,
\begin{align*}
-A^{-1}(x)\frac{\partial A}{\partial x_p}(x)A^{-1}(x)
&=-\lim_{t\rightarrow0}A^{-1}(x+t\delta_p)\lim_{t\rightarrow 0}\frac1t[A(x+t\delta_p)-A(x)]A^{-1}(x)\\
&=\lim_{t\rightarrow0}\frac1t[A^{-1}(x+t\delta_p)-A^{-1}(x)].
\end{align*}
In particular, if $A(x)$ is invertible for all $x\in\Omega$, then $A^{-1}\in\rmC^1(\Omega,\mathbb{R}^{N\times N})$ with:
\begin{equation}
\label{equation.derivative of inverse}
\frac{\partial A^{-1}}{\partial x_p}(x)
=-A^{-1}(x)\frac{\partial A}{\partial x_p}(x)A^{-1}(x).
\end{equation}
These facts in hand, we are ready to compute the partial derivatives of~\eqref{equation.definition of E}.  Let:
\begin{equation*}
\Pi(x):=\rmI-F^*(x)(F(x)F^*(x))^{-1}F(x).    
\end{equation*}
It is well known that $\Pi(x)$ is the orthogonal projection operator onto the null space of $F(x)$.  As such, our error function~\eqref{equation.definition of E} can be simplified as:
\begin{equation*}
E(x)
=\norm{\Pi(x)w}^2
=\ip{\Pi(x)w}{\Pi(x)w}
=\ip{w}{\Pi^*(x)\Pi(x)w}
=\ip{w}{\Pi(x)w}.
\end{equation*}
By the product rule, the $p$th derivative of $E(x)$ is thus $\frac{\partial E}{\partial x_p}(x)=\ip{w}{\frac{\partial\Pi(x)}{\partial x_p}w}$.  To compute this derivative of $\Pi$, we first use~\eqref{equation.derivative of inverse} and the product rule to find the corresponding partial derivative of $[F(x)F^*(x)]^{-1}$; here, for the sake of succinctness and readability, we shorten ``$F(x)$" to simply ``$F$":
\begin{equation}
\label{equation.gradient computation 1}
\frac{\partial}{\partial x_p}(FF^*)^{-1}=-(FF^*)^{-1}\Bigparen{\frac{\partial F}{\partial x_p}F^*+F\frac{\partial F^*}{\partial x_p}}(FF^*)^{-1}.
\end{equation}
The product rule then gives:
\begin{align*}
\frac{\partial \Pi}{\partial x_p}
&=\frac{\partial}{\partial x_p}\bigbracket{\rmI-F^*(FF^*)^{-1}F}\\
&=-\frac{\partial F^*}{\partial x_p}(FF^*)^{-1}F-F^*(FF^*)^{-1}\frac{\partial F}{\partial x_p}\\
&\qquad+F^*(FF^*)^{-1}\Bigparen{\frac{\partial F}{\partial x_p}F^*+F\frac{\partial F^*}{\partial x_p}}(FF^*)^{-1}F.
\end{align*}
Combining the first and fourth terms above, as well as the second and third, gives:
\begin{align}
\nonumber
\frac{\partial \Pi}{\partial x_p}
&=-\bigbracket{\rmI-F^*(FF^*)^{-1}F}\frac{\partial F^*}{\partial x_p}(FF^*)^{-1}F-F^*(FF^*)^{-1}\frac{\partial F}{\partial x_p}\bigbracket{\rmI-F^*(FF^*)^{-1}F}\\
\label{equation.gradient computation 2}
&=-\Pi\,\Pi_p^*-\Pi_p^{}\Pi,
\end{align}
where we adopt the notation $\Pi_p:=F^*(FF^*)^{-1}\frac{\partial F}{\partial x_p}$.  Since $\Pi^*=\Pi$ and we are working with real-valued inner products, the $p$th  partial derivative of $E$ is thus:
\begin{equation}
\label{equation.gradient computation 3}
\frac{\partial E}{\partial x_p}
=\biggip{w}{\frac{\partial\Pi}{\partial x_p}w}
=\ip{w}{-\Pi\,\Pi_p^*-\Pi_p^{}\Pi w}
=-2\ip{w}{\Pi_p^{}\Pi w}.
\end{equation}
In light of~\eqref{equation.gradient computation 3}, computing second derivatives of $E$ involves computing the first derivatives of $\Pi_p$.  This computation parallels that of~\eqref{equation.gradient computation 2}, making renewed use of~\eqref{equation.gradient computation 1}.  To be precise, for any $p,q=1,\dotsc,P$,
\begin{align*}
\frac{\partial \Pi_p}{\partial x_q}
&=\frac{\partial}{\partial x_q}F^*(FF^*)^{-1}\frac{\partial F}{\partial x_p}\\
&=\frac{\partial F^*}{\partial x_q}(FF^*)^{-1}\frac{\partial F}{\partial x_p}+F^*(FF^*)^{-1}\frac{\partial^2 F}{\partial x_q\partial x_p}\\
&\qquad-F^*(FF^*)^{-1}\Bigparen{\frac{\partial F}{\partial x_q}F^*+F\frac{\partial F^*}{\partial x_q}}(FF^*)^{-1}\frac{\partial F}{\partial x_p}.
\end{align*}
Combining the first and fourth terms above yields:
\begin{equation*}
\frac{\partial \Pi_p}{\partial x_q}
=\Pi\frac{\partial F^*}{\partial x_q}(FF^*)^{-1}\frac{\partial F}{\partial x_p}+\Pi_{q,p}^{}-\Pi_q^{}\Pi_p^{},
\end{equation*}
where $\Pi_{q,p}:=F^*(FF^*)^{-1}\frac{\partial^2 F}{\partial x_q\partial x_p}$.  Putting $(FF^*)^{-1}(FF^*)$ in the first term gives:
\begin{align}
\nonumber
\frac{\partial \Pi_p}{\partial x_q}
&=\Pi\frac{\partial F^*}{\partial x_q}(FF^*)^{-1}FF^*(FF^*)^{-1}\frac{\partial F}{\partial x_p}+\Pi_{q,p}^{}-\Pi_q^{}\Pi_p^{}\\
\label{equation.gradient computation 4}
&=\Pi\,\Pi_q^{*}\Pi_p^{}+\Pi_{q,p}^{}-\Pi_q^{}\Pi_p^{}.
\end{align}
Having~\eqref{equation.gradient computation 2} and~\eqref{equation.gradient computation 4}, we take the $q$th partial derivative of~\eqref{equation.gradient computation 3}:
\begin{align*}
-\frac12\frac{\partial^2 E}{\partial x_q\partial x_p}
&=\biggip{w}{\frac{\partial \Pi_p}{\partial x_q}\Pi w}+\biggip{w}{\Pi_p\frac{\partial \Pi}{\partial x_q}w}\\
&=\bigip{w}{(\Pi\,\Pi_q^{*}\Pi_p^{}+\Pi_{q,p}^{}-\Pi_q^{}\Pi_p^{})\Pi w}+\bigip{w}{\Pi_p^{}(-\Pi\,\Pi_q^*-\Pi_q^{}\Pi)w}\\
&=\bigip{w}{(\Pi_{q,p}^{}\Pi+\Pi\,\Pi_q^{*}\Pi_p^{}\Pi-\Pi_q^{}\Pi_p^{}\Pi-\Pi_{p}^{}\Pi_q^{}\Pi-\Pi_p^{}\Pi\,\Pi_q^*)w}.
\end{align*}
We now rearrange this statement to better indicate the symmetry between $p$ and $q$ in this expression, which is consistent with the symmetry of mixed partial derivatives:
\begin{align}
\nonumber
\frac{\partial^2 E}{\partial x_q\partial x_p}
&=2\bigip{w}{(\Pi_{p}^{}\Pi_q^{}+\Pi_q^{}\Pi_p^{})\Pi w}+2\bigip{\Pi\,\Pi_p^*w}{\Pi\,\Pi_q^*w}\\
\label{equation.gradient computation 5}
&\qquad-2\bigip{\Pi_q^{}\Pi w}{\Pi_p^{}\Pi w}-2\bigip{w}{\Pi_{q,p}^{}\Pi w}.
\end{align}
We summarize~\eqref{equation.gradient computation 3} and~\eqref{equation.gradient computation 5} as the following result:
\begin{theorem}
\label{theorem.gradient and Hessian}
Let $\Omega$ be an open subset of $\bbR^P$ and let $F\in\rmC^2(\Omega,\mathbb{R}^{M\times N})$ where the columns $\set{f_n(x)}_{n=1}^{N}$ of $F(x)$ always form a frame for $\bbR^M$.  For any $w\in\mathbb{R}^N$, the first and second partial derivatives of~\eqref{equation.definition of E} are:
\begin{align*}
\frac{\partial E}{\partial x_p}
&=-2\ip{w}{\Pi_p^{}\Pi w},\\
\frac{\partial^2 E}{\partial x_q\partial x_p}
&=2\bigip{w}{(\Pi_{p}^{}\Pi_q^{}+\Pi_q^{}\Pi_p^{})\Pi w}+2\bigip{\Pi\,\Pi_p^*w}{\Pi\,\Pi_q^*w}\\
&\qquad-2\bigip{\Pi_q^{}\Pi w}{\Pi_p^{}\Pi w}-2\bigip{w}{\Pi_{q,p}^{}\Pi w},
\end{align*}
for all $p,q=1,\ldots,P$, where:
\begin{equation*}
\Pi:=\rmI-F^*(FF^*)^{-1}F,
\quad
\Pi_p:=F^*(FF^*)^{-1}\frac{\partial F}{\partial x_p},
\quad
\Pi_{q,p}:=F^*(FF^*)^{-1}\frac{\partial^2 F}{\partial x_q\partial x_p}.
\end{equation*}
\end{theorem}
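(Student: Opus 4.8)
The plan is to obtain both formulas by differentiating the scalar function $E(x)=\ip{w}{\Pi(x)w}$ directly, using only the matrix product rule, the inner-product rule, and the inverse-derivative identity \eqref{equation.derivative of inverse} established above; the entire task is then organizing the algebra so that products of factors collapse into $\Pi$ or into the auxiliary operators $\Pi_p$ and $\Pi_{q,p}$. First I would record the standing hypotheses that make every expression legitimate: since the columns of $F(x)$ form a frame for all $x\in\Omega$, the frame operator $FF^*$ is invertible throughout $\Omega$, so \eqref{equation.derivative of inverse} applies and $(FF^*)^{-1}$ is itself in $\rmC^1(\Omega,\bbR^{N\times N})$; together with $F\in\rmC^2(\Omega,\bbR^{M\times N})$ this guarantees that $\Pi$, $\Pi_p$, and $\Pi_{q,p}$ are well defined and that the mixed partials computed below are continuous, hence symmetric in $p$ and $q$.

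For the gradient I would start from $E=\ip{w}{\Pi w}$, which uses only that $\Pi$ is a self-adjoint idempotent, so a single application of the inner-product rule gives $\frac{\partial E}{\partial x_p}=\ip{w}{\frac{\partial\Pi}{\partial x_p}w}$ and reduces everything to differentiating $\Pi$. I would compute $\frac{\partial}{\partial x_p}(FF^*)^{-1}$ from \eqref{equation.derivative of inverse} and the product rule to obtain \eqref{equation.gradient computation 1}, then apply the product rule to $\Pi=\rmI-F^*(FF^*)^{-1}F$. The key observation is that the four resulting terms pair up: two of them share the trailing factor $\frac{\partial F^*}{\partial x_p}(FF^*)^{-1}F=\Pi_p^*$ and the other two share the leading factor $F^*(FF^*)^{-1}\frac{\partial F}{\partial x_p}=\Pi_p$, so that factoring these out leaves $\rmI-F^*(FF^*)^{-1}F=\Pi$ on the opposite side, giving the compact form $\frac{\partial\Pi}{\partial x_p}=-\Pi\,\Pi_p^*-\Pi_p\Pi$ of \eqref{equation.gradient computation 2}. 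Substituting this into the inner product and using $\Pi^*=\Pi$ makes the two terms coincide, producing $\frac{\partial E}{\partial x_p}=-2\ip{w}{\Pi_p\Pi w}$.

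For the Hessian I would differentiate this gradient in $x_q$. The product rule splits it into a term carrying $\frac{\partial\Pi_p}{\partial x_q}$ and a term carrying $\frac{\partial\Pi}{\partial x_q}$; the latter is already known from \eqref{equation.gradient computation 2}, so the new ingredient is $\frac{\partial\Pi_p}{\partial x_q}$. I would compute this exactly as before, differentiating $F^*(FF^*)^{-1}\frac{\partial F}{\partial x_p}$ and reusing \eqref{equation.gradient computation 1}; after combining terms, inserting $(FF^*)^{-1}(FF^*)=\rmI$ into the leftover factor exposes the factorization $\frac{\partial\Pi_p}{\partial x_q}=\Pi\,\Pi_q^*\Pi_p+\Pi_{q,p}-\Pi_q\Pi_p$ of \eqref{equation.gradient computation 4}. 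Substituting both derivatives, expanding, and repeatedly applying $\Pi=\Pi^*=\Pi^2$ to absorb projections and to move factors across the inner product yields five terms, which I would regroup to pair each adjoint with its partner; this is exactly the rearrangement \eqref{equation.gradient computation 5} recorded in the theorem.

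The difficulty here is organizational rather than conceptual. Two small algebraic moves do all the real work: recognizing $F^*(FF^*)^{-1}F=\rmI-\Pi$ so that pairs of terms collapse into a single factor of $\Pi$, and inserting $(FF^*)^{-1}(FF^*)=\rmI$ so that a stray $\frac{\partial F^*}{\partial x_q}(FF^*)^{-1}\frac{\partial F}{\partial x_p}$ splits into $\Pi_q^*\Pi_p$. The main obstacle I anticipate is the final bookkeeping for the Hessian: tracking the five operator products that arise, deciding which copies of $\Pi$ to absorb via idempotency and which inner-product factors to transpose using self-adjointness, and then grouping the result so that it is manifestly symmetric under $p\leftrightarrow q$ — a symmetry guaranteed in advance by $F\in\rmC^2(\Omega,\bbR^{M\times N})$, which forces the mixed partials of $E$ to agree.
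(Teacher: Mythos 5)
Your proposal is correct and takes essentially the same route as the paper's own derivation: the same reduction to $E=\ip{w}{\Pi w}$, the same use of \eqref{equation.derivative of inverse} and term-pairing via $F^*(FF^*)^{-1}F=\rmI-\Pi$ to obtain \eqref{equation.gradient computation 2}, and the same insertion of $(FF^*)^{-1}(FF^*)$ to reach \eqref{equation.gradient computation 4} before regrouping into the symmetric form \eqref{equation.gradient computation 5}. Your preliminary remarks on the invertibility of $FF^*$ and on the $\rmC^2$ hypothesis guaranteeing symmetric mixed partials are sound and, if anything, slightly more explicit than the paper's treatment.
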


Though the expressions for the gradient and Hessian of $E(x)$ in Theorem~\ref{theorem.gradient and Hessian} are complicated, they are nevertheless straightforward to implement in the Newton's method iteration~\eqref{equation.Newton's method}.  To be precise, given a current guess $x_k$, our first task is to compute \smash{$\frac{\partial F}{\partial x_p}(x_k)$} and \smash{$\frac{\partial^2 F}{\partial x_q\partial x_p}(x_k)$} for all $p,q=1,\dotsc,P$.  Calculating these derivatives columnwise, this is equivalent to finding \smash{$\frac{\partial f_n}{\partial x_p}(x_k)$} and \smash{$\frac{\partial^2 f_n}{\partial x_q\partial x_p}(x_k)$} for all $n=1,\dotsc,N$ and $p,q=1,\dotsc,P$.  For the particular frame~\eqref{equation.multistatic frame} that arises in FDOA multistatic radar, the first derivatives can be found by substituting $x_k-a_n$ and $x_k-b_n$ into the following easily-derived formula:
\begin{equation*}
\frac{\partial}{\partial x_p}\frac{x}{\norm{x}}
=\frac{1}{\norm{x}}\pi(x)\delta_p,
\end{equation*}
and summing the results; here, \smash{$\pi(x):=\rmI-\frac{xx^*}{\norm{x}^2}$} is the projection operator onto the orthogonal complement of the line passing through $x$, where $x^*$ denotes the transpose of the column vector $x$.  The second derivatives of $f_n(x)$ at $x_k$ can similarly be found using the relation:
\begin{equation*}
\frac{\partial^2}{\partial x_q\partial x_p}\frac{x}{\norm{x}}
=-\frac{1}{\norm{x}^3}\bigbracket{\pi(x)(\delta_p^{}\delta_q^*+\delta_q^{}\delta_p^*)+(\delta_p^*\pi(x)\delta_q^{})\rmI}x.
\end{equation*}

With \smash{$\frac{\partial F}{\partial x_p}(x_k)$} and \smash{$\frac{\partial^2 F}{\partial x_q\partial x_p}(x_k)$} in hand for all $p,q=1,\dotsc,P$, our second task in any given iteration~\eqref{equation.Newton's method} of Newton's method is to compute $(\nabla E)(x_k)$ and $(\nabla^2 E)(x_k)$ using Theorem~\ref{theorem.gradient and Hessian}; we now briefly outline an efficient means for doing so.  We begin by computing the synthesis operator $\tilde{F}(x_k):=[F(x_k)F^*(x_k)]^{-1}F(x_k)$ of the canonical dual frame $\set{\tilde{f}_n(x_k)}_{n=1}^{N}$.  Here, we emphasize that it is not necessary to explicitly compute the inverse of the frame operator $F(x_k)F^*(x_k)$.  Rather, the best algorithms for numerically computing $\tilde{F}(x_k)$, such as \textsc{Matlab}'s ``pinv" command, rely on methods of numerical linear algebra, such as QR factorization.  We then write the relevant operators in terms of $\tilde{F}^*(x_k)$:
\begin{align*}
\Pi(x_k)&=\rmI-\tilde{F}^*(x_k)F(x_k),\\
\Pi_p(x_k)&=\tilde{F}^*(x_k)\tfrac{\partial F}{\partial x_p}(x_k),\\
\Pi_{q,p}(x_k)&=\tilde{F}^*(x_k)\tfrac{\partial^2 F}{\partial x_q\partial x_p}(x_k),
\end{align*}
and compute, in the following order, the quantities:
\begin{gather}
\nonumber
\Pi(x_k)w,\quad \set{\Pi_p(x_k)\Pi(x_k) w}_{p=1}^{P},\quad \set{\Pi_p^*(x_k) w}_{p=1}^P,\\
\label{equation.computation quantities}
\set{\Pi(x_k)\Pi_p^*(x_k)w}_{p=1}^{P},\qquad \set{\Pi_{q,p}(x_k)\Pi(x_k)w}_{p,q=1}^P.
\end{gather}
Here, to be efficient, we make use of previous computations and exploit associativity to avoid costly matrix-matrix multiplications.  For example, to compute $\Pi_p(x_k)\Pi(x_k) w$, we take the previously computed $N\times 1$ vector $\Pi(x_k) w$, multiply it by the $M\times N$ matrix \smash{$\tfrac{\partial F}{\partial x_p}(x_k)$} and then multiply the resulting $M\times 1$ vector by the previously computed $N\times M$ matrix $\tilde{F}^*(x_k)$; this avoids the $\mathrm{O}(M^2N)$ cost of computing \smash{$\tilde{F}^*(x_k)\tfrac{\partial F}{\partial x_p}(x_k)$} directly.  

By rearranging some of the operators in Theorem~\ref{theorem.gradient and Hessian}, we see that every entry of $(\nabla E)(x_k)$ and $(\nabla^2 E)(x_k)$ can be found by computing inner products of the quantities \eqref{equation.computation quantities}.  Once this gradient and Hessian are found, we then compute $x_{k+1}$ according to~\eqref{equation.Newton's method}.  By iterating this process, we produce a sequence $\set{x_k}_{k=0}^{\infty}$ which hopefully converges to the minimizer of $E(x)$.  For the FDOA multistatic radar problem in particular, this approach seems to work well, often successfully localizing the target; see~\cite{Mixon:06} for extensive experimentation on simulated data.

\section{The sensitivity and uniqueness of minimizers}

For any given parameters $x\in\Omega\subseteq\bbR^P$, the quantity $E(x)$, as defined in~\eqref{equation.definition of E}, is the squared-distance of a given $w\in\bbR^N$ from the range of the analysis operator of the frame $\set{f_n(x)}_{n=1}^{N}$.  In the previous section, we discussed a numerical method for minimizing $E(x)$, that is, for finding the particular frame(s) which are most likely to have generated a given $w$.  In this section, we consider the uniqueness of such a minimizer $x$, as well as how sensitive it is to changes in $w$.

These two issues---sensitivity and uniqueness---are very important in real-world applications of this minimization problem.  For example, let us recall FDOA multistatic radar where $P=M$, $f_n(x)$ is given by~\eqref{equation.multistatic frame}, and $w$ is a list of Doppler-effect measurements, one for each of $N$ distinct pairs of transmitters and receivers.  Let $x_0$ and $v_0$ in $\bbR^M$ denote a target's position and velocity at a given instant, respectively.  By \eqref{equation.system of differential equations}, the $n$th component of $F^*(x_0)v_0\in\bbR^N$ is the instantaneous rate of change of the bistatic distance~\eqref{equation.bistatic distance}.  In a perfect radar system, the FDOA measurements $w$ would equal $F^*(x_0)v_0$.  However, due to various real-world issues, such as noise, quantization and an oversimplified physics model, our actual FDOA measurements are $w=F^*(x_0)v_0+\varepsilon$, where $\varepsilon\in\bbR^N$ is some hopefully small error vector.  The value of $E(x)$ at the target's true location $x_0$ is thus:
\begin{align}
\nonumber
E(x_0)
&=\bignorm{\set{\rmI-F^*(x_0)[F(x_0)F^*(x_0)]^{-1}F(x_0)}(F^*(x_0)v_0+\varepsilon)}^2\\
\label{equation.sensitivity analysis 1}
&=\bignorm{\set{\rmI-F^*(x_0)[F(x_0)F^*(x_0)]^{-1}F(x_0)}\varepsilon}^2.
\end{align}
In particular, if we make the unrealistic assumption that $\varepsilon=0$, then the target's true location $x_0$ is a global minimizer of $E(x)$, having value zero.  This begs the question: for $\varepsilon\neq0$, how far away is the minimizer of:
\begin{equation}
\label{equation.sensitivity analysis 2}
E(x):=\bignorm{\set{\rmI-F^*(x)[F(x)F^*(x)]^{-1}F(x)}(F^*(x_0)v_0+\varepsilon)}^2
\end{equation}
from $x_0$?  Though a complete answer to this question eludes us, we are nevertheless able to make two meaningful points.

First, since $\rmI-F^*(x_0)[F(x_0)F^*(x_0)]^{-1}F(x_0)$ is the projection operator onto the null space of $F(x_0)$ we have $E(x_0)\leq\norm{\varepsilon}^2$, with equality precisely when $\varepsilon$ lies in this null space.  Thus, the target must lie somewhere in the level set $\set{x\in\bbR^P: E(x)\leq\norm{\varepsilon}^2}$, the size of which is based both on the size of our measurement error $\varepsilon$ and the geometry of the surface $E(x)$.  Indeed, if $E(x)$ has small curvature at its minimizer, then even a slight error in $w$ may result in a large error in $x_0$.  The expressions for the gradient and Hessian of this surface, given in Theorem~\ref{theorem.gradient and Hessian}, are our first steps towards a better understanding of this geometry.

Second, we note that the minimizer of~\eqref{equation.sensitivity analysis 2} need not be unique even when $\varepsilon=0$.  For example if $N=M$, then for any $x\in\Omega$ the frame $\set{f_n(x)}_{n=1}^{N}$ is actually a basis for $\bbR^M$; this implies $F(x)$ is invertible and so $F^*(x)[F(x)F^*(x)]^{-1}F(x)=\rmI$.  In this case, we therefore have that \eqref{equation.sensitivity analysis 2} is identically zero, meaning every $x$ is a minimizer.  This is not good: for FDOA multistatic radar, this means that for any $x$, there exists a vector $v$ such that a target with that position and velocity would yield the measured Doppler vector $w=F^*(x_0)v_0+\varepsilon$; it is therefore impossible to localize the target.  

In practice, we address this uniqueness problem by adding more measurements.  Indeed, even when $\varepsilon=0$, the $N$-dimensional measurement vector $w=F^*(x_0)v_0$ depends on $M+P$ unknowns---the $P$-dimensional vector $x_0$ and the $M$-dimensional vector $v_0$---and so it's reasonable to believe that we need at least $N\geq M+P$ in order to guarantee that~\eqref{equation.sensitivity analysis 2} has $x_0$ as its unique minimizer.  To be precise, note that in this $\varepsilon=0$ case, the set of all minimizers of~\eqref{equation.sensitivity analysis 2} is equal to its set of zeros, namely:
\begin{equation}
\label{equation.definition of zero set}
\Bigset{x\in\Omega: \bigbracket{\rmI-F^*(x)[F(x)F^*(x)]^{-1}F(x)}w=0},
\end{equation}
which contains $x_0$.  And though \eqref{equation.definition of zero set} equals $\Omega$ for $N=M$, our numerical experiments~\cite{Mixon:06} indicate that making $Q$ additional measurements, that is, increasing $N$ to $M+Q$ for some $Q=1,\dotsc,P$, shrinks \eqref{equation.definition of zero set} down to a $(P-Q)$-dimensional submanifold of $\Omega$.  In particular, for $N\geq M+P$, the set of minimizers~\eqref{equation.definition of zero set} always seems to be discrete.  Though we are unable to formally prove that such behavior always holds, we are able to give the following partial result, which shows that when a certain set of $N$ vectors spans $\bbR^{M+P}$, then it is impossible for $E(x)$ to have a smooth continuum of minimizers.

\begin{theorem}
\label{theorem.uniqueness}
Let $\Omega$ be an open subset of $\bbR^P$ and let $F\in\rmC^1(\Omega,\mathbb{R}^{M\times N})$, where the columns $\set{f_n(x)}_{n=1}^{N}$ of $F(x)$ always form a frame for $\bbR^M$.  Given some $w\in\bbR^N$, if the vectors:
\begin{equation}
\label{theorem.uniqueness 1}
\set{f_n(x)\oplus Df_n(x)^*[F(x)F^*(x)]^{-1}F(x)w}_{n=1}^{N}
\end{equation}
form a frame for $\bbR^M\oplus\bbR^P$ for every $x\in\Omega$, then the zero set~\eqref{equation.definition of zero set} of the error function~\eqref{equation.definition of E} cannot contain a nonconstant smooth curve.  Here, $Df_n(x)$ denotes the $M\times P$ Jacobian matrix of $f_n$ at $x$.
\end{theorem}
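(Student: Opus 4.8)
The plan is to argue by contradiction: assume the zero set~\eqref{equation.definition of zero set} contains a nonconstant smooth curve $x:(a,b)\to\Omega$, and deduce from the frame hypothesis that $\dot x(s)=0$ for every $s$, forcing $x$ to be constant. The engine of the argument is to differentiate, along the curve, the identity expressing that $w$ lies in the range of the analysis operator, and then to recognize that the resulting orthogonality relations are exactly a statement about the vectors~\eqref{theorem.uniqueness 1}.

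First I would extract a coordinate vector. Since $E(x(s))=\norm{\Pi(x(s))w}^2=0$ and $\Pi(x)=\rmI-F^*(FF^*)^{-1}F$ is the orthogonal projection onto the null space of $F(x)$, its complement $F^*(FF^*)^{-1}F$ is the orthogonal projection onto the range of $F^*(x)$; hence $\Pi(x(s))w=0$ is equivalent to $w=F^*(x(s))v(s)$, where $v(s):=[F(x(s))F^*(x(s))]^{-1}F(x(s))w$. Because $F\in\rmC^1$ and $FF^*$ is everywhere invertible (the columns form a frame), the quotient rule~\eqref{equation.derivative of inverse} together with the chain rule guarantees that $v$ is a $\rmC^1$ function of $s$, so $\dot v(s)$ exists.

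Next I would differentiate $F^*(x(s))v(s)=w$ with respect to $s$. Since $w$ is constant, each of its $N$ components has vanishing derivative. Writing the $n$th component as $\ip{v(s)}{f_n(x(s))}$ and applying the product rule together with the chain rule $\frac{\rmd}{\rmd s}f_n(x(s))=Df_n(x(s))\dot x(s)$, I obtain, for every $n=1,\dotsc,N$,
\[
\ip{\dot v(s)}{f_n(x(s))}+\ip{\dot x(s)}{Df_n(x(s))^*v(s)}=0.
\]
The key observation is that, upon substituting $v(s)=[F(x(s))F^*(x(s))]^{-1}F(x(s))w$, the second inner product pairs $\dot x(s)$ against precisely the $\bbR^P$-component of the $n$th vector in~\eqref{theorem.uniqueness 1}. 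Thus the displayed equation says exactly that $\dot v(s)\oplus\dot x(s)\in\bbR^M\oplus\bbR^P$ is orthogonal to each of the $N$ vectors~\eqref{theorem.uniqueness 1}. Since these span $\bbR^M\oplus\bbR^P$ by hypothesis, the only such vector is zero, so $\dot v(s)\oplus\dot x(s)=0$; in particular $\dot x(s)=0$. As this holds for every $s\in(a,b)$, the curve $x$ is constant, the desired contradiction.

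I expect the only delicate points to be in the setup rather than the finish. One must correctly identify $v(s)=(FF^*)^{-1}Fw$ as the unique $\rmC^1$ coordinate vector representing $w$ in the range of $F^*$ on the zero set, and, after differentiating, recognize that the pairing against $\dot x(s)$ reproduces the second block of~\eqref{theorem.uniqueness 1}. Once the relation is assembled into the direct-sum inner product on $\bbR^M\oplus\bbR^P$, the spanning hypothesis closes the argument immediately.
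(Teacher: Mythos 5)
Your proof is correct, and while it rests on the same differentiate-along-the-curve idea as the paper, its execution is genuinely different in a way worth noting. The paper argues by contrapositive at a single point: it differentiates the projection identity $\Pi(x(t))w=0$ coordinatewise using the formula $\frac{\partial\Pi}{\partial x_p}=-\Pi\,\Pi_p^*-\Pi_p^{}\Pi$ from its Section~2 matrix calculus, simplifies via $\Pi(\hat{x})w=0$, and then invokes the fact that the null space of $\Pi(\hat{x})$ equals the range of $F^*(\hat{x})$ to deduce the mere \emph{existence} of some $\hat{v}$ with $F^*(\hat{x})\hat{v}=\sum_{p}\hat{u}(p)\Pi_p^*(\hat{x})w$, after which unwinding coordinates exhibits $\hat{v}\oplus(-\hat{u})$ as a nonzero vector orthogonal to all of~\eqref{theorem.uniqueness 1}. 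You instead trade the zero-set condition for the explicit linear identity $F^*(x(s))v(s)=w$ with the dual-frame coefficients $v(s)=[F(x(s))F^*(x(s))]^{-1}F(x(s))w$, and differentiate that; this makes the paper's abstract $\hat{v}$ explicit (up to sign it is $-\dot{v}(s)$) and entirely bypasses the derivative-of-projection computation~\eqref{equation.gradient computation 2}, which is the most technical ingredient of the paper's argument --- the quotient rule~\eqref{equation.derivative of inverse} enters your proof only to certify that $v(s)$ is $\rmC^1$, and your appeal to injectivity of $F^*(x)$ correctly pins down $v(s)$ as the unique coefficient path. Your version is arguably cleaner and proves slightly more: you obtain $\dot{x}(s)=0$ (indeed $\dot{v}(s)\oplus\dot{x}(s)=0$) at \emph{every} parameter value, forcing the curve to be constant, whereas the paper derives a contradiction only at one point where $\dot{x}\neq0$. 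The trade-off is that the paper's route reuses machinery it has already built and needs for Theorem~\ref{theorem.gradient and Hessian}, so within the paper it costs almost nothing, while your route is the more self-contained and elementary of the two.
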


\begin{proof}
We prove by contrapositive, assuming the set of zeros of $E(x)$ contains a nonconstant smooth curve and proving~\eqref{theorem.uniqueness 1} is not always a frame for $\bbR^M\oplus\bbR^P$.  To be precise, let $x\in\rmC^1((-\delta,\delta),\bbR^P)$ be a smooth curve of zeros of $E(x)$ and let $\hat{x}:=x(0)$, $\hat{u}:=\dot{x}(0)\neq 0$.

Adopting the shorthand $\Pi(x):=\rmI-F^*(x)[F(x)F^*(x)]^{-1}F^*(x)$ of the previous section, we have $0=E(x(t))=\norm{\Pi(x(t))w}^2$ for all $t\in(-\delta,\delta)$ and so $\Pi(x(t))w=0$ for all such $t$.  For any $n=1,\dotsc,N$, letting $\psi_n:\Omega\rightarrow\bbR$, $\psi_n(x)=\ip{\Pi(x)w}{\delta_n}$, we therefore have that $\psi_n(x(t))=0$ for all $t\in(-\delta,\delta)$.  By the chain rule, the derivative of this equation at $t=0$ is thus:
\begin{equation}
\label{equation.proof of uniqueness 1}
0
=\frac{\rmd}{\rmd t}\psi_n(x(t))\Bigr|_{t=0}
=\sum_{p=1}^{P}\frac{\partial\psi_n}{\partial x_p}(x(0))\frac{\rmd x_p}{\rmd t}(0)
=\sum_{p=1}^{P}\frac{\partial\psi_n}{\partial x_p}(\hat{x})\hat{u}(p),
\end{equation}
where $\hat{u}(p)$ denotes the $p$th entry of $\hat{u}\in\bbR^P$.  Now, the product rule gives:
\begin{equation*}
\frac{\partial\psi_n}{\partial x_p}(\hat{x})
=\frac{\partial}{\partial x_p}\ip{\Pi(x)w}{\delta_n}\Bigr|_{x=\hat{x}}
=\biggip{\frac{\partial\Pi}{\partial x_p}(x)w}{\delta_n}\Bigr|_{x=\hat{x}}.
\end{equation*}
At this point, a computation~\eqref{equation.gradient computation 2} from the previous section gives:
\begin{equation*}
\frac{\partial\psi_n}{\partial x_p}(\hat{x})
=-\bigip{[\Pi(\hat{x})\Pi_p^*(\hat{x})+\Pi_p^{}(\hat{x})\Pi(\hat{x})]w}{\delta_n}.
\end{equation*}
Moreover, since $\hat{x}$ is a zero of $E(x)$ we have $\Pi(\hat{x})w=0$, and so this simplifies to:
\begin{equation}
\label{equation.proof of uniqueness 2}
\frac{\partial\psi_n}{\partial x_p}(\hat{x})
=-\ip{\Pi(\hat{x})\Pi_p^*(\hat{x})w}{\delta_n}.
\end{equation}
Substituting~\eqref{equation.proof of uniqueness 2} into~\eqref{equation.proof of uniqueness 1} gives:
\begin{equation*}
0
=-\sum_{p=1}^{P}\ip{\Pi(\hat{x})\Pi_p^*(\hat{x})w}{\delta_n}\hat{u}(p)
=-\biggip{\Pi(\hat{x})\sum_{p=1}^{P}\hat{u}(p)\Pi_p^*(\hat{x})w}{\delta_n}.
\end{equation*}
Since $n$ is arbitrary, we have that \smash{$\Pi(\hat{x})\sum_{p=1}^{P}\hat{u}(p)\Pi_p^*(\hat{x})=0$}.  This implies that \smash{$\sum_{p=1}^{P}\hat{u}(p)\Pi_p^*(\hat{x})$} lies in the null space of $\Pi(\hat{x})=\rmI-F^*(\hat{x})[F(\hat{x})F(\hat{x})^*]^{-1}F(\hat{x})$, which is known to equal the range (column space) of the analysis operator $F^*(\hat{x})$.  In particular, there necessarily exists $\hat{v}\in\bbR^M$ such that:
\begin{equation*}
F^*(\hat{x})\hat{v}=\sum_{p=1}^{P}\hat{u}(p)\Pi_p^*(\hat{x})w.
\end{equation*}
As such, for any $n=1,\dots,N$,
\begin{equation}
\label{equation.proof of uniqueness 3}
0
=\biggip{F^*(\hat{x})\hat{v}-\sum_{p=1}^{P}\hat{u}(p)\Pi_p^*(\hat{x})w}{\delta_n}
=\ip{\hat{v}}{F(\hat{x})\delta_n}-\sum_{p=1}^{P}\hat{u}(p)\ip{w}{\Pi_p(\hat{x})\delta_n}.
\end{equation}
To simplify this statement, note that $F(\hat{x})\delta_n=f_n(\hat{x})$ and so:
\begin{align*}
\ip{w}{\Pi_p(\hat{x})\delta_n}
&=\biggip{w}{F^*(\hat{x})[F(\hat{x})F^*(\hat{x})]^{-1}\frac{\partial F}{\partial x_p}(\hat{x})\delta_n}\\
&=\biggip{[F(\hat{x})F^*(\hat{x})]^{-1}F(\hat{x})w}{\frac{\partial f_n}{\partial x_p}(\hat{x})}\\
&=\biggip{[F(\hat{x})F^*(\hat{x})]^{-1}F(\hat{x})w}{Df_n(\hat{x})\delta_p}\\
&=\bigparen{Df_n(\hat{x})^*[F(\hat{x})F^*(\hat{x})]^{-1}F(\hat{x})w}(p).
\end{align*}
As such,~\eqref{equation.proof of uniqueness 3} becomes:
\begin{align*}
0
&=\ip{\hat{v}}{f_n(\hat{x})}_{\bbR^M}-\ip{\hat{u}}{Df_n(\hat{x})^*[F(\hat{x})F^*(\hat{x})]^{-1}F(\hat{x})w}_{\bbR^P}\\
&=\bigip{\hat{v}\oplus(-\hat{u})}{f_n(\hat{x})\oplus Df_n(\hat{x})^*[F(\hat{x})F^*(\hat{x})]^{-1}F(\hat{x})w}_{\bbR^M\oplus\bbR^P}.
\end{align*}
In particular, since $\hat{u}\neq0$ then $\hat{v}\oplus(-\hat{u})$ is a nonzero vector which is orthogonal to:
\begin{equation*}
f_n(\hat{x})\oplus Df_n(\hat{x})^*[F(\hat{x})F^*(\hat{x})]^{-1}F(\hat{x})w
\end{equation*}
for all $n=1,\dotsc,N$, meaning this set of vectors is not a frame for $\bbR^M\oplus\bbR^P$.
\end{proof}

\section{A variational approach}

In the previous sections, we studied the problem of minimizing~\eqref{equation.definition of E}.  As noted in the introduction, such minimization can be used to ``solve" an overdetermined system of differential equations~\eqref{equation.system of differential equations} in the least-squares sense of minimizing~\eqref{equation.least squares regularization}, even when the needed data $w(t)$ is only available at a single instant $t_0$.  There, we treated the unknown quantities $x(t_0)$ and $\dot{x}(t_0)$ as \textit{independent} unknowns, which reduces the problem of minimizing~\eqref{equation.least squares regularization} to~\eqref{equation.nonlinear least squares} in the special case where $P=M$.  However, this simplification comes at a cost: in light of Theorem~\ref{theorem.uniqueness}, we expect that at least $M+P=2M$ measurements must be made in order to uniquely determine $x(t_0)$.  When only $N<2M$ measurements are available, we are therefore led to consider alternative approaches to the minimization of~\eqref{equation.least squares regularization} in which $\dot{x}(t)$ is properly treated as a quantity that depends on $x(t)$.

In particular, in this section, we assume that the data $w(t)$ is given over an interval of time $[t_0,t_1]$, and we seek a parameterized curve $x(t)$ that minimizes the integral of~\eqref{equation.least squares regularization} over time, namely:
\begin{equation*}
\hat{E}(x)
:=\int_{t_0}^{t_1}\sum_{n=1}^{N}\abs{\ip{\dot{x}(t)}{\nabla \varphi_n(x(t))}-w_n(t)}^2\, \rmd t.
\end{equation*}
More formally, letting $F^*(x)$ denote the analysis operator of the vectors $\set{f_n(x)}_{n=1}^{N}$, $f_n(x):=\nabla\varphi_n(x)$ and letting $\Omega\subseteq\bbR^M$ denote the set of points $x$ at which these vectors form a frame for $\bbR^M$, we seek the minimizer of the functional:
\begin{equation}
\label{equation.functional}
\hat{E}:\rmC^1([t_0,t_1],\Omega)\rightarrow\bbR,
\qquad
\hat{E}(x):=\int_{t_0}^{t_1}\norm{F^*(x(t))\dot{x}(t)-w(t)}^2\,\rmd t.
\end{equation}
In the following result, we use techniques of variational calculus to find the Euler-Lagrange equation that any minimizer of~\eqref{equation.functional} necessarily satisfies. 
\begin{theorem}
\label{theorem.variation}
Let $\varphi_n\in\rmC^2(\Omega)$ for each $n=1,\dotsc,N$, and $w\in\rmC^1([t_0,t_1],\bbR)$.  Then any minimizer $x$ of \eqref{equation.functional} necessarily satisfies the Euler-Lagrange equation:
\begin{equation}
\label{equation.Euler-Lagrange}
F(x(t))\frac{\rmd}{\rmd t}\bigbracket{F^*(x(t))\dot{x}(t)-w(t)}=0,
\end{equation}
for all $t\in[t_0,t_1]$.
\end{theorem}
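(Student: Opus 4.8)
The plan is to run the standard first-variation argument for $\hat{E}$ and then use the fact that each $f_n=\nabla\varphi_n$ is a gradient field to collapse the result into~\eqref{equation.Euler-Lagrange}. First I would fix a purported minimizer $x\in\rmC^1([t_0,t_1],\Omega)$ and an arbitrary variation $h\in\rmC^1([t_0,t_1],\bbR^M)$ with $h(t_0)=h(t_1)=0$; since $\Omega$ is open and $[t_0,t_1]$ is compact, the perturbed curve $x+\epsilon h$ stays in $\Omega$ for all sufficiently small $\epsilon$, so it is admissible. Abbreviating the residual by $g(t):=F^*(x(t))\dot{x}(t)-w(t)$, the necessary condition for a minimum is that the first variation vanish:
\begin{equation*}
0=\frac{\rmd}{\rmd\epsilon}\hat{E}(x+\epsilon h)\Bigr|_{\epsilon=0}=\int_{t_0}^{t_1}2\biggip{g}{\Bigparen{\sum_{m=1}^{M}h_m\tfrac{\partial F^*}{\partial x_m}(x)}\dot{x}+F^*(x)\dot{h}}\,\rmd t,
\end{equation*}
where the first summand in the inner product is the directional derivative of $F^*(x)\dot{x}$ coming from perturbing $x$, and the second comes from perturbing $\dot{x}$. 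I would then treat the two summands separately.

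For the $F^*(x)\dot{h}$ summand I would move $F(x)$ to the other side of the inner product and integrate by parts in $t$; the boundary terms vanish because $h(t_0)=h(t_1)=0$, leaving $-\int_{t_0}^{t_1}2\ip{\frac{\rmd}{\rmd t}[F(x)g]}{h}\,\rmd t$. The other summand is where the hypothesis on $\varphi_n$ enters. Its $n$th residual component pairs $g_n$ against $\sum_m h_m\ip{\dot{x}}{\frac{\partial f_n}{\partial x_m}(x)}$, and I would rewrite $\ip{\dot{x}}{\frac{\partial f_n}{\partial x_m}}=\sum_k\dot{x}_k\frac{\partial^2\varphi_n}{\partial x_k\partial x_m}$. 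Since $\varphi_n\in\rmC^2(\Omega)$, its Hessian is symmetric, so this equals $\sum_k\frac{\partial^2\varphi_n}{\partial x_m\partial x_k}\dot{x}_k=\frac{\rmd}{\rmd t}(f_n(x(t)))_m$, i.e.\ the directional derivative of $f_n$ along $\dot{x}$ is exactly the total time-derivative of $f_n$ along the curve. Summing over $n$ and $m$, this summand becomes $\int_{t_0}^{t_1}2\ip{(\frac{\rmd}{\rmd t}F(x))g}{h}\,\rmd t$.

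Adding the two contributions and applying the product rule $\frac{\rmd}{\rmd t}[F(x)g]=(\frac{\rmd}{\rmd t}F(x))g+F(x)\dot{g}$, the two $(\frac{\rmd}{\rmd t}F(x))g$ terms cancel and the first variation reduces to $-\int_{t_0}^{t_1}2\ip{F(x)\dot{g}}{h}\,\rmd t$. Demanding that this vanish for every admissible $h$ and invoking the fundamental lemma of the calculus of variations gives $F(x(t))\dot{g}(t)=0$ for all $t$, which is precisely~\eqref{equation.Euler-Lagrange}.

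The step I expect to be the crux is the handling of the $\frac{\partial F^*}{\partial x_m}$ summand, and specifically the identity $\ip{\dot{x}}{\frac{\partial f_n}{\partial x_m}}=\frac{\rmd}{\rmd t}(f_n)_m$. This is what makes the $(\frac{\rmd}{\rmd t}F(x))g$ terms cancel, and it holds \emph{only} because each $f_n$ is a gradient, so that $\frac{\partial(f_n)_k}{\partial x_m}=\frac{\partial(f_n)_m}{\partial x_k}$. For a general family $F$ whose columns are not gradient fields this symmetry fails, the cancellation does not occur, and the Euler-Lagrange equation would retain an extra term rather than simplifying to~\eqref{equation.Euler-Lagrange}; thus the gradient structure of the multistatic frame~\eqref{equation.multistatic frame} is essential to the clean form of the result.
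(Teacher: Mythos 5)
Your proposal is correct and is essentially the paper's proof. The paper invokes the classical Euler--Lagrange equations for the integrand $\hat{e}(t,x,v)=\norm{F^*(x)v-w(t)}^2$, computes $\frac{\partial\hat{e}}{\partial x_m}$ and $\frac{\partial\hat{e}}{\partial v_m}$, and then uses exactly your key identity---symmetry of the Hessian of each $\varphi_n$ gives $\frac{\rmd}{\rmd t}F^*(x(t))\delta_m=\frac{\partial F^*}{\partial x_m}(x(t))\dot{x}(t)$---to cancel the extra term; your cancellation of the two $(\frac{\rmd}{\rmd t}F(x))g$ terms is the same computation in transposed form, and your closing observation that the gradient structure of the $f_n$ is what enables the cancellation matches the paper's use of the $\varphi_n\in\rmC^2$ hypothesis. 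The one technical caveat: you re-derive the Euler--Lagrange equation inline, and your integration by parts on $\int\ip{F(x)g}{\dot{h}}\,\rmd t$ tacitly assumes $t\mapsto F(x(t))g(t)$ is differentiable, which is not automatic for a minimizer that is merely $\rmC^1$, since $g$ contains $\dot{x}$; the standard repair is the du Bois--Reymond lemma (integrate the \emph{other} factor and conclude the regularity a posteriori), and this is precisely the regularity the paper absorbs by citing ``classical results from the calculus of variations''---indeed the theorem's conclusion already presupposes that $\frac{\rmd}{\rmd t}\bigbracket{F^*(x(t))\dot{x}(t)-w(t)}$ exists, so both proofs lean on the same classical machinery here.
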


\begin{proof}
We write the functional~\eqref{equation.functional} as:
\begin{equation*}
\hat{E}(x)=\int_{t_0}^{t_1}\hat{e}(t,x(t),\dot{x}(t))\,\rmd t,
\end{equation*}
where the integrand function is:
\begin{equation*}
\hat{e}:[t_0,t_1]\times\Omega\times\bbR^M\rightarrow\bbR,\qquad \hat{e}(t,x,v)=\norm{F^*(x)v-w(t)}^2.
\end{equation*}
Note that since each entry of $F^*(x)$ is a first partial derivative of some $\varphi_n\in\rmC^2(\Omega)$, we necessarily have that $F^*(x)$ is continuously differentiable over $\Omega$.  This, combined with the assumption that $w\in\rmC^1([t_0,t_1],\bbR)$ and the fact that $\hat{e}$ is quadratic in the entries of $v$, implies that $\hat{e}$ is continuously differentiable over $[t_0,t_1]\times\Omega\times\bbR^M$.  Classical results from the calculus of variations then tell us that the functional $\hat{E}$ is Fr\'{e}chet-differentiable over $\rmC^1([t_0,t_1],\Omega)$, and that any minimizer of $\hat{E}$ necessarily satisfies the following system of $M$ Euler-Lagrange equations:
\begin{equation}
\label{equation.proof of Euler-Lagrange 1}
0=\frac{\partial\hat{e}}{\partial x_m}(t,x(t),\dot{x}(t))-\frac{\rmd}{\rmd t}\frac{\partial\hat{e}}{\partial v_m}(t,x(t),\dot{x}(t)),\quad\forall m=1,\dotsc,M.
\end{equation}
For any $(t,x,v)\in[t_0,t_1]\times\Omega\times\bbR^M$, the product rule gives the partial derivative of $\hat{e}$ with respect to the $m$th spatial variable to be:
\begin{align}
\nonumber
\frac{\partial\hat{e}}{\partial x_m}(t,x,v)
&=\frac{\partial}{\partial x_m}\norm{F^*(x)v-w(t)}^2\\
\nonumber
&=\frac{\partial}{\partial x_m}\bigip{F^*(x)v-w(t)}{F^*(x)v-w(t)}\\
\nonumber
&=2\biggip{F^*(x)v-w(t)}{\frac{\partial}{\partial x_m}\bigbracket{F^*(x)v-w(t)}}\\
\label{equation.proof of Euler-Lagrange 2}
&=2\biggip{F^*(x)v-w(t)}{\frac{\partial F^*}{\partial x_m}(x)v}.
\end{align}
Similarly, the partial derivative with respect to the $m$th momentum variable is:
\begin{align}
\nonumber
\frac{\partial\hat{e}}{\partial v_m}(t,x,v)
&=2\biggip{F^*(x)v-w(t)}{\frac{\partial}{\partial v_m}\bigbracket{F^*(x)v-w(t)}}\\
\label{equation.proof of Euler-Lagrange 3}
&=2\bigip{F^*(x)v-w(t)}{F^*(x)\delta_m}.
\end{align}
For a parameterized curve $x\in\rmC^1([t_0,t_1],\Omega)$ that minimizes $\hat{E}$, evaluating~\eqref{equation.proof of Euler-Lagrange 2} and~\eqref{equation.proof of Euler-Lagrange 3} at triples of the form $(t,x,v)=(t,x(t),\dot{x}(t))$ and then substituting the results into~\eqref{equation.proof of Euler-Lagrange 1} gives:
\begin{align}
\nonumber
0&=\biggip{F^*(x(t))\dot{x}(t)-w(t)}{\frac{\partial F^*}{\partial x_m}(x(t))\dot{x}(t)}\\
\label{equation.proof of Euler-Lagrange 4}
&\qquad-\frac{\rmd}{\rmd t}\bigip{F^*(x(t))\dot{x}(t)-w(t)}{F^*(x(t))\delta_m},\quad\forall m=1,\dotsc,M.
\end{align}
To simplify, note that the product rule gives:
\begin{align*}
\frac{\rmd}{\rmd t}\bigip{F^*(x(t))\dot{x}(t)-w(t)}{F^*(x(t))\delta_m}
&=\biggip{\frac{\rmd}{\rmd t}\bigbracket{F^*(x(t))\dot{x}(t)-w(t)}}{F^*(x(t))\delta_m}\\
&\quad+\biggip{F^*(x(t))\dot{x}(t)-w(t)}{\frac{\rmd}{\rmd t}F^*(x(t))\delta_m}.
\end{align*}
Substituting this expression into~\eqref{equation.proof of Euler-Lagrange 4} and collecting common terms gives:
\begin{align}
\nonumber
0&=\biggip{F^*(x(t))\dot{x}(t)-w(t)}{\frac{\partial F^*}{\partial x_m}(x(t))\dot{x}(t)-\frac{\rmd}{\rmd t}F^*(x(t))\delta_m}\\
\label{equation.proof of Euler-Lagrange 5}
&\qquad-\biggip{\frac{\rmd}{\rmd t}\bigbracket{F^*(x(t))\dot{x}(t)-w(t)}}{F^*(x(t))\delta_m},\quad\forall m=1,\dotsc,M.
\end{align}
At this point, it suffices for us to show that:
\begin{equation}
\label{equation.proof of Euler-Lagrange 6}
\frac{\rmd}{\rmd t}F^*(x(t))\delta_m=\frac{\partial F^*}{\partial x_m}(x(t))\dot{x}(t).
\end{equation}
Indeed, substituting~\eqref{equation.proof of Euler-Lagrange 6} into~\eqref{equation.proof of Euler-Lagrange 5} yields:
\begin{align*}
0
&=\biggip{\frac{\rmd}{\rmd t}\bigbracket{F^*(x(t))\dot{x}(t)-w(t)}}{F^*(x(t))\delta_m}\\
&=\biggip{F(x(t))\frac{\rmd}{\rmd t}\bigbracket{F^*(x(t))\dot{x}(t)-w(t)}}{\delta_m},\quad\forall m=1,\dotsc,M,
\end{align*}
which is equivalent to our claim \eqref{equation.Euler-Lagrange}.  To show that the vector equation \eqref{equation.proof of Euler-Lagrange 6} holds, note that the $n$th coordinate of the left-hand side is:
\begin{equation*}
\biggip{\frac{\rmd}{\rmd t}F^*(x(t))\delta_m}{\delta_n}
=\frac{\rmd}{\rmd t}\bigip{F^*(x(t))\delta_m}{\delta_n}
=\frac{\rmd}{\rmd t}\bigip{\delta_m}{f_n(x(t))}.
\end{equation*}
Now recall that the $n$th frame vector $f_n(x)$ is defined as the gradient of the $n$th scalar-valued function $\varphi_n(x)$.  As such, its $m$th entry is $\frac{\partial\varphi_n}{\partial x_m}(x)$, implying:
\begin{equation}
\label{equation.proof of Euler-Lagrange 7}
\biggip{\frac{\rmd}{\rmd t}F^*(x(t))\delta_m}{\delta_n}
=\frac{\rmd}{\rmd t}\frac{\partial\varphi_n}{\partial x_m}(x(t))
=\biggip{\dot{x}(t)}{\Bigparen{\nabla\frac{\partial\varphi_n}{\partial x_m}}(x(t))}.
\end{equation}
Now, since $\varphi_n\in\rmC^2(\Omega)$ by assumption, we know its second partial derivatives are symmetric, implying:
\begin{equation*}
\Bigparen{\nabla\frac{\partial\varphi_n}{\partial x_m}}(x)
=\Bigparen{\frac{\partial}{\partial x_m}\nabla\varphi_n}(x)
=\frac{\partial f_n}{\partial x_m}(x),\quad\forall x\in\Omega.
\end{equation*}
Using this fact in~\eqref{equation.proof of Euler-Lagrange 7} yields:
\begin{equation*}
\biggip{\frac{\rmd}{\rmd t}F^*(x(t))\delta_m}{\delta_n}
=\biggip{\dot{x}(t)}{\frac{\partial f_n}{\partial x_m}(x(t))}
=\biggip{\frac{\partial F^*}{\partial x_m}(x(t))\dot{x}(t)}{\delta_n},
\end{equation*}
which, since it holds for all $n=1,\dotsc,N$, implies our claim \eqref{equation.proof of Euler-Lagrange 6}.
\end{proof}

We conclude by noting that Theorem~\ref{theorem.variation} does not give an explicit algorithm for finding the minimizer of~\eqref{equation.functional}.  Rather, it provides a first-order, nonlinear differential equation that any minimizer must satisfy.  In practice, one way to make use of this result is to guess a large number of initial position and velocity combinations $(x(t_0),\dot{x}(t_0))$; for each combination, we can then use a numerical differential equation solver to extrapolate the target's trajectory $x:[t_0,t_1]\rightarrow\bbR^M$ according to~\eqref{equation.Euler-Lagrange}; we then choose the particular curve $x$ whose corresponding value $\hat{E}(x)$ is minimal.  We leave further developments of these ideas for future research.

\section*{Acknowledgments}
We thank Laura Suzuki, William Sturgis and the anonymous reviewer for their enlightening comments and suggestions.  This work was supported by NSF DMS 1042701, NSF CCF 1017278, AFOSR F1ATA01103J001, AFOSR F1ATA00183G003 and the A.B.~Krongard Fellowship.  The views expressed in this article are those of the authors and do not reflect the official policy or position of the United States Air Force, Department of Defense or the U.S.~Government.

\end{document}